%% file: paper.tex
\documentclass[11pt]{article}

\usepackage{amsmath,amssymb,amsfonts,textcomp,amsthm,xifthen,psfrag,graphicx,color,MnSymbol}
\usepackage[T1]{fontenc}
\usepackage{hyperref}
\usepackage{placeins}
\usepackage{xargs}

\date{}

\input{header}

\title{Minimum residual discretization of a semilinear elliptic problem
\thanks{Supported by ANID-Chile through FONDECYT projects 1230013, 11240731,
        and through the Austrian Science Fund (FWF) under grants P33216, PAT3446525}}
\author{
Carlos Garc\'\i a Vera\thanks{
Departamento de Ciencias Exactas, Universidad de Los Lagos, Osorno, Chile,
email: {\tt carlos.garcia@ulagos.cl}}
\and
Norbert Heuer\thanks{
Facultad de Matem\'aticas, Pontificia Universidad Cat\'olica de Chile,
Santiago, Chile,
email: {\tt nheuer@uc.cl}}
\and
Dirk Praetorius\thanks{
Institut f\"ur Analysis und Scientific Computing,
Technische Universit\"at Wien, Vienna, Austria,
email: {\tt dirk.praetorius@asc.tuwien.ac.at}}
}
\begin{document}
\maketitle

\begin{abstract}
We propose a least-squares penalization as a means to extend the
discontinuous Petrov--Galerkin (DPG) method with optimal test functions
to a class of semilinear elliptic problems.
The nonlinear contributions are replaced with independent
unknowns so that standard DPG techniques apply to the then linear problem
with non-trivial kernel. The nonlinear relations are added as
least-squares constraints. Assuming solvability of the semilinear problem
and an Aubin--Nitsche-type approximation property for the primal variable, we prove
a C\'ea estimate for the approximation error in canonical norms.
Numerical results with uniform and adaptively refined meshes
illustrate the performance of the scheme.

\bigskip
\noindent
{\em AMS Subject Classification}:
65N30, 
65J15,  
35J91   

\medskip
\noindent
{\em Keywords}: DPG method, least squares, minimum residual method, semilinear problem
\end{abstract}

\section{Introduction and model problem}

In this paper, we propose an extension of the DPG technique with optimal test functions
to solve semilinear elliptic problems. Our method falls into the class of
finite-element minimum residual discretizations.

In the last 15 years, the discontinuous Petrov--Galerkin (DPG) method with optimal test functions
has been established as a framework for robust approximation schemes of
various classes of problems, e.g., starting with transport and
Laplace-type problems
\cite{DemkowiczG_10_CDP,NiemiCC_13_ASD,DemkowiczG_11_ADM,BramwellDGQ_12_Lhp},
also in non-divergence form \cite{Fuehrer_21_UFL},
extensions to wave problems and electromagnetics
\cite{ZitelliMDGPC_11_CDP,DemkowiczGMZ_12_WEA,GopalakrishnanMO_14_DDE,DemkowiczGNS_17_SDM,CarstensenDG_16_BSF},
singularly perturbed problems \cite{DemkowiczH_13_RDM,HeuerK_17_RDM,FuehrerH_24_RDT},
parabolic problems \cite{FuehrerHS_17_TSD,FuehrerHK_21_ABE,DieningS_22_STD},
fourth-order problems and the deformation of thin structures
\cite{NiemiBD_11_DPG,CaloCN_14_ADP,FuehrerHN_19_UFK,FuehrerGH_21_LFD,FuehrerHN_22_DMS,FuehrerHN_23_DMR},
boundary integral equations \cite{HeuerP_14_UFH,HeuerK_17_DPG},
problems set in Banach spaces \cite{MugaVdZ_20_DLP},
and control problems \cite{FuehrerF_24_DML}.
For a more complete and detailed discussion, we refer to \cite{DemkowiczG_25_DPG}.

The underlying idea of this Petrov--Galerkin framework is to generate discrete stability
by the selection of so-called optimal test functions, made efficient by the use
of product (or ``broken'') test spaces. The efficiency of this approach is inherently
related with the linearity of the underlying problem.
The development of efficient DPG schemes for nonlinear problems is still a major challenge.
This paper is a contribution to overcome this hurdle.

The existing literature on DPG analysis for nonlinear problems is scarce
and, as far as we are aware, consists of the following three approaches.
Similarly to least-squares \cite[Chap.~8]{BochevG_09_LSF},
Demkowicz and co-authors propose a Gauss--Newton scheme which solves
sequences of linearized problems, in this case by the DPG method.
We refer to \cite{ChanDM_14_DMS,RobertsDM_15_DPG} for fluid-flow problems,
and the detailed discussion in \cite[Sec.~10]{DemkowiczG_25_DPG}.
Carstensen and co-authors propose and analyze a lowest-order
nonlinear minimum residual setting \cite{CarstensenBHW_18_NDP},
whereas Cantin and Heuer \cite{CantinH_18_DFS} develop an extended DPG setting
for strongly monotone operators.

In addition to these approaches there are other cases where problems are formally
nonlinear and require specific constructions:
linear PDEs with nonlinear constraint \cite{BuiThanhG_14_PCO};
contact problems \cite{FuehrerHS_18_DMS};
eigenvalue problems \cite{BertrandBS_23_DPG};
and problems in Banach spaces \cite{MugaVdZ_20_DLP} where the construction of optimal
test functions becomes nonlinear.

In this paper, we propose a DPG setting for semilinear elliptic problems.
Our problems are motivated by the Richards equation \cite{Richards_31_CCL}.
It is nonlinear of degenerate parabolic type and used to describe groundwater flow
in unsaturated media. Some early results for its numerical analysis can be found
in \cite{RaduPK_04_OCE,PopRK_04_MFE,SchneidKR_04_PEE}, and there has been quite
some interest in this problem recently. A time-stepping approach for this
equation, e.g. as in \cite{FuehrerHS_17_TSD,FuehrerHK_21_ABE},
leads to a sequence of semilinear problems of second order with coercive principal part
and, in several applications, Lipschitz continuous advective and reactive terms.
We represent both nonlinear terms by independent variables, thus giving rise to a linear
problem, and add the nonlinear relations as constraints.
This idea is similar to the approach in \cite{CantinH_18_DFS} for strongly monotone
operators, but with least-squares constraints similar to the least-squares DPG-BEM coupling
in \cite[\S 2.5.1]{FuehrerHK_17_CDB}. The linearly extended problem is approximated
by the DPG method, complemented with finite-element least squares for the constraints.
Globally, it is a finite-element minimum residual method that minimizes
the linear residual in a discretized dual test space and nonlinear residuals in $L_2$.
A major advantage of this approach is that all residuals consist of local terms
associated with elements. They can be calculated and serve in a straightforward
manner as indicators for adaptive mesh refinements,
cf.~\cite{DemkowiczGN_12_CDP,CarstensenDG_14_PEC}.
A second advantage is that the nonlinear terms are confined to
simple $L_2$ residuals and that the more expensive steps of the DPG part
(assembling of matrices, inversion of the Riesz operator)
have to be performed only once in the solution of the nonlinear systems by
the Newton method.

We note that there are a recent analyses \cite{BertrandBRS_LSF,BringmannP_GCA}
of least squares for nonlinear problems. The approach in \cite{BertrandBRS_LSF}
assumes small data and is not applicable in our case.
On the other hand, \cite{BringmannP_GCA} makes strong assumptions on the
quasilinear nonlinearity to show global linear convergence of the Zarantanello linearization.
In contrast, we need to assume the existence of
a solution to the semilinear problem and show a C\'ea error estimate in canonical norms
for approximations sufficiently close to a solution.
Critical ingredient is the assumption of an Aubin--Nitsche type error estimate
for the primal variable, akin to the superconvergence properties for linear problems proved
by F\"uhrer \cite{Fuehrer_18_SDM,Fuehrer_19_SDM}.
However, we stress the fact that our Aubin--Nitsche assumption relies on a
primal DPG setting as in \cite{DemkowiczG_13_PDM,Niemi_23_APP} rather than
on the predominant ultraweak formulations, in particular considered in
\cite{Fuehrer_18_SDM,Fuehrer_19_SDM}.
In this context, we note that primal formulations, at least from the analysis
point of view, seem to be better suited for parabolic problems than ultraweak formulations,
cf. the results in \cite{FuehrerHS_17_TSD} (ultraweak) versus \cite{FuehrerHK_21_ABE} (primal).
In that sense, our primal setting should be the sensible choice to deal with the
parabolic Richards equation. This is left to future research.

Our model problem reads as follows.
Given a bounded Lipschitz domain $\Omega\subset\R^d$ ($d=2,3$) and $f\in L_2(\Omega)$,
find $u\in H^1_0(\Omega)$ with
\begin{align} \label{pde}
   -\div(\kappa\grad u + \rho(u)\beta) + \gamma(u) = f\quad\text{in}\ \Omega.
\end{align}
Here, for simplicity, $\beta\in\R^d$ is constant,
$\kappa\in L_\infty(\Omega;\R^{d\times d})$ symmetric, uniformly positive definite,
and $\rho,\gamma$ are Lipschitz continuous functions.
(This can be reduced to the Lipschitz continuity in a neighborhood
of the range of a solution $u$, cf. Remark~\ref{rem_Lip} below.)

Of course, the right-hand side function $f$ can be assumed to be zero by a change of
the nonlinearity $\gamma$. Just for simplicity of manufacturing model solutions,
we consider a right-hand side function.
Furthermore, we assume a homogeneous Dirichlet condition for ease of presentation.
Non-homogeneous Dirichlet data are straightforward to implement through the initial guess
in the solution of the nonlinear systems by the Newton method.
Then, iteration updates satisfy the homogeneous Dirichlet condition.
A thorough convergence analysis of the employed Newton approach as well as the required
interplay of the stopping criterion with mesh adaptivity is left to future research;
see, e.g., \cite{HeidW_20_AIL,HeidPW_21_ECO,Heid_24_SNA,MiraciPS_UCL,BringmannBP_NMA}
in the context of standard finite elements.

The remainder of this paper is as follows.
In the next section, we introduce required operators and spaces, present a minimum residual
formulation of the model problem, prove their equivalence (Theorem~\ref{thm_min}),
and represent the kernel of the linear extended problem (Proposition~\ref{prop_ker}).
In Section~\ref{sec_disc}, we present and analyze the discrete scheme,
including a C\'ea error estimate (Theorem~\ref{thm_Cea}).
A numerical implementation is discussed in Section~\ref{sec_num}.
This includes the representation of the nonlinear systems (\S\ref{sec_Newton})
which are solved by the Newton method, the discussion of adaptivity (\S\ref{sec_adap}),
and the presentation of numerical experiments for a smooth and a singular solution
(\S\ref{sec_exp}).

Throughout the paper, the notation $A\lesssim B$ indicates that
$A\le c B$ for a positive constant $c$ independent of involved functions and a possibly
present mesh, and $A\gtrsim B$ means $B\lesssim A$.

\section{Minimum residual formulation} \label{sec_min}

This section develops the minimum residual formulation of the model problem, and establishes
an equivalence between both problems. We also identify the kernel of the linear part of
the formulation.
We start with introducing some notation, spaces, and a trace operator which are used throughout
the remainder.

For relatively open $\omega\subset\overline\Omega$,
we use standard Sobolev spaces $L_2(\omega)$, $H^1(\omega)$, $H^1_0(\omega)$,
the gradient and divergence operators $\grad$, $\div$,
and denote by $\vdual{\cdot}{\cdot}_\omega$ and $\|\cdot\|_\omega$ the $L_2(\omega)$
duality and norm, respectively (generically for scalar and vector-valued functions).
We drop the index $\omega$ when $\omega=\Omega$.
We also need the norms $\|\cdot\|_{\div}:=(\|\cdot\|^2+\|\div\cdot\|^2)^{1/2}$
and $\|\cdot\|_{1,\omega}:=(\|\cdot\|_\omega^2+\|\grad\cdot\|_\omega^2)^{1/2}$.
We introduce a regular triangulation $\mesh$ of $\Omega$ and denote by $\vdual{\cdot}{\cdot}_\mesh$
and $\|\cdot\|_\mesh$ the corresponding duality and norm in the product space $L_2(\mesh)$.
We also need the product space $H^1(\mesh):=\Pi_{\el\in\mesh} H^1(\el)$. For an element
$\el\in\mesh$, $\dual{\cdot}{\cdot}_{\partial\el}$ stands for the $L_2$ duality on the boundary
$\partial\el$ of $\el$.

Integration by parts for $\mesh$-piecewise smooth functions gives rise to traces on
the mesh skeleton $\cS$. More formally, we define the normal trace operator
\begin{align*}
   \trn:\;&\left\{\begin{array}{cll}
               H(\div,\Omega) &\rightarrow& H^1(\mesh)^*,\\
               \tau &\mapsto& \dual{\trn(\tau)}{v}_\cS
               := \vdual{v}{\div\tau} + \vdual{\grad v}{\tau}_\mesh
            \end{array}\right.
\end{align*}
with image
\[
   H^{-1/2}(\cS) := \trn(H(\div,\Omega)).
\]
We furnish this space with the canonical trace norm
\[
   \|\hsigma\|_{-1/2,\cS} := \inf\{\|\sigma\|_{\div};\; \trn(\sigma)=\hsigma\},
   \quad \hsigma\in H^{-1/2}(\cS).
\]
Furthermore, we define the space
\[
   U:=H^1_0(\Omega)\times H^{-1/2}(\cS)\times L_2(\Omega)\times L_2(\Omega)
\]
with norm
\[
   \|\bw\|_U := \Bigl(\|\grad w\|^2 + \|\htau\|_{-1/2,\cS}^2 + \|s\|^2 + \|t\|^2\Bigr)^{1/2},
   \quad \bw=(w,\htau,s,t)\in U.
\]
We introduce $q:=\rho(u)$ and $r:=\gamma(u)$. Then, problem \eqref{pde} reads
\[
   -\div(\kappa\grad u+q\beta)+r=f\quad\text{in}\ \Omega,
\]
written variationally with test space $V:=H^1(\mesh)$ as
\begin{align} \label{prob_bL}
   b(u,\hsigma,q,r;v):=
   \vdual{\kappa\grad u+q\beta}{\grad v}_\mesh - \dual{\hsigma}{v}_\cS + \vdual{r}{v}
   = L(v) := \vdual{f}{v}
   \quad\forall v\in V
\end{align}
where $\hsigma:=\trn(\kappa\grad u+q\beta)\in H^{-1/2}(\cS)$.
We split the problem into a linear part with operator
\[
   B:\;U\to V^*,\quad
   \bu\mapsto B\bu:\ V\ni v\mapsto b(\bu;v)
\]
and the nonlinear relations $q=\rho(u)$, $r=\gamma(u)$.
This transforms problem \eqref{pde} into the minimum residual formulation
\begin{align} \label{min}
   \bu=(u,\hsigma,q,r)=\argmin_{\bw=(w,\htau,s,t)\in U}
   \Bigl(\|B\bw-L\|_{V^*}^2 + \|\rho(w)-s\|^2 + \|\gamma(w)-t\|^2\Bigr).
\end{align}
Norm $\|\cdot\|_{V^*}$ is the standard norm of the topological dual $V^*$ of $V$.
We note that minimizing the residual $\|B\cdot-L\|_{V^*}$ is equivalent to solving
\begin{equation} \label{prob_bLTheta}
   b(\bu,\Theta \bw)=L(\Theta\bw)\quad\forall \bw\in U
\end{equation}
for $\bu\in U$. Here, $\Theta:\;U\to V$ is the trial-to-test operator defined by
\[
   \ip{\Theta\bw}{v}_V = b(\bw,v)\quad\forall v\in V
\]
with inner product $\ip{v}{w}_V:=\vdual{v}{w}+\vdual{\grad v}{\grad w}_\mesh$
for $v,w\in V$. Of course, formulations \eqref{prob_bLTheta} and \eqref{prob_bL} are
equivalent. For details we refer to \cite{DemkowiczG_25_DPG}.
A discretization of \eqref{prob_bLTheta} amounts to a DPG method.
Therefore, a discretization of the minimum residual formulation \eqref{min}
can be interpreted as a combination of the DPG and least-squares methods.

Model problem \eqref{pde} and formulation \eqref{min} are equivalent in the following sense.

\begin{theorem} \label{thm_min}
Suppose that \eqref{pde} has a solution.
Then, any solution $u\in H^1_0(\Omega)$ of \eqref{pde} gives rise to a solution
$\bu=(u,\hsigma,q,r)$ of \eqref{min} with
$\hsigma=\trn(\kappa\grad u+q\beta)$, $q=\rho(u)$, and $r=\gamma(u)$.
On the other hand, any solution $\bu=(u,\hsigma,q,r)$ of \eqref{min} provides
a solution $u\in H^1_0(\Omega)$ of \eqref{pde} and satisfies
$\hsigma=\trn(\kappa\grad u +\rho(u)\beta)$.
\end{theorem}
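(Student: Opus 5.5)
The plan rests on one observation: the functional in \eqref{min} is a sum of three nonnegative terms. It therefore attains the value $0$ exactly at zeros of all three terms, and whenever some $\bw\in U$ makes the functional vanish, the minimizers of \eqref{min} are precisely the zeros of the functional. Both directions of the theorem then reduce to relating zeros of the functional to solutions of \eqref{pde}.

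For the first direction, let $u\in H^1_0(\Omega)$ solve \eqref{pde} and set $q:=\rho(u)$, $r:=\gamma(u)$. Lipschitz continuity of $\rho,\gamma$ gives $q,r\in L_2(\Omega)$. The flux $\sigma:=\kappa\grad u+q\beta$ lies in $L_2(\Omega)^d$, and by \eqref{pde} in the distributional sense $\div\sigma=r-f\in L_2(\Omega)$, so $\sigma\in H(\div,\Omega)$. Hence $\hsigma:=\trn(\sigma)\in H^{-1/2}(\cS)$ is well defined and $\bu\in U$. By the definition of $\trn$,
\[
   \dual{\hsigma}{v}_\cS=\vdual{v}{\div\sigma}+\vdual{\grad v}{\sigma}_\mesh
   =\vdual{v}{r-f}+\vdual{\grad v}{\sigma}_\mesh\quad\forall v\in V,
\]
so $b(\bu;v)=\vdual{f}{v}=L(v)$ for all $v\in V$, that is, $B\bu=L$. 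The two least-squares terms vanish by construction, so the functional is zero at $\bu$ and $\bu$ solves \eqref{min}.

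For the converse, existence of a solution of \eqref{pde} together with the first part shows that the minimum value of \eqref{min} is $0$. Any minimizer $\bu=(u,\hsigma,q,r)$ therefore satisfies $q=\rho(u)$, $r=\gamma(u)$ and $B\bu=L$. I then test $b(\bu;v)=L(v)$ in two ways.
\begin{itemize}
\item First take $v\in C_0^\infty(\Omega)\subset V$. The skeleton pairing $\dual{\hsigma}{v}_\cS$ vanishes, since for any $\tau\in H(\div,\Omega)$ with $\trn(\tau)=\hsigma$ integration by parts over $\Omega$ of the globally $H^1_0$ function $v$ gives $\vdual{v}{\div\tau}+\vdual{\grad v}{\tau}=0$. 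What remains is $\vdual{\kappa\grad u+\rho(u)\beta}{\grad v}+\vdual{\gamma(u)}{v}=\vdual{f}{v}$, which is the weak form of \eqref{pde}. So $u\in H^1_0(\Omega)$ solves \eqref{pde}, and $\sigma:=\kappa\grad u+\rho(u)\beta\in H(\div,\Omega)$ with $\div\sigma=\gamma(u)-f$.
\item Then take general $v\in V$ and subtract the identity $\dual{\trn(\sigma)}{v}_\cS=\vdual{v}{\gamma(u)-f}+\vdual{\grad v}{\sigma}_\mesh$. This gives $\dual{\hsigma-\trn(\sigma)}{v}_\cS=0$ for all $v\in V$. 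Since $H^{-1/2}(\cS)$ is by definition a subspace of $V^*=H^1(\mesh)^*$, it follows that $\hsigma=\trn(\sigma)$.
\end{itemize}

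The main obstacle is the conceptual point that the minimum value of \eqref{min} is zero. This is exactly where the hypothesis that \eqref{pde} has a solution enters; without it, a minimizer could have a positive residual and need not solve anything. The remaining potential subtlety, that the skeleton term vanishes for $H^1_0$ test functions, follows directly from the definition of $\trn$.
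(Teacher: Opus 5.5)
Your proof is correct and follows the same route as the paper. The functional is nonnegative, so the tuple built from a solution of \eqref{pde} is a root and hence a minimizer. The minimum value is therefore zero, so every minimizer is a root, and $B\bu=L$, $q=\rho(u)$, $r=\gamma(u)$ then give back \eqref{pde} and the trace identity.

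You also supply details that the paper's short proof leaves implicit:
\begin{itemize}
\item the membership $\kappa\grad u+\rho(u)\beta\in H(\div,\Omega)$;
\item the verification of $B\bu=L$ from the definition of $\trn$;
\item testing with $H^1_0$-functions to remove the skeleton term;
\item the identification of $\hsigma$ through $H^{-1/2}(\cS)\subset V^*$.
\end{itemize}
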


\begin{proof}
For a solution $u$ of \eqref{pde},
$\bu:=(u,\trn(\kappa\grad u+\rho(u)\beta)+\gamma(u)),\rho(u),\gamma(u))\in U$
is a root of the residual in \eqref{min} by construction. It therefore is a minimizer.
On the other hand, if \eqref{pde} has a solution, then there is a root
$\bu=(u,\hsigma,q,r)\in U$ of the residual.
Then, $B\bu=L$, $\rho(u)=q$, $\gamma(u)=r$. We conclude that
$b(u,\hsigma,\rho(u),\gamma(u);v)=L(v)$ for any $v\in V$ and this implies
that $u\in H^1_0(\Omega)$ satisfies \eqref{pde} and
$\hsigma=\trn(\kappa\grad u +\rho(u)\beta)$.
\end{proof}

To identify the kernel of operator $B$, let us introduce the operator
$\cE:\;L_2(\Omega)\times L_2(\Omega)\to U$ defined by
\begin{align} \label{E}
   \cE(q,r):=(u,\hsigma,q,r)\quad \text{where}\quad u\in H^1_0(\Omega):\
   -\div\kappa\grad u=\beta\cdot\grad q-r,\quad
   \hsigma:=\trn(\kappa\grad u+q\beta).
\end{align}

\begin{prop} \label{prop_ker}
Operator $\cE:\;L_2(\Omega)\times L_2(\Omega)\to U$ is bounded with image
\(
   \cN(B) = \cE(L_2(\Omega)\times L_2(\Omega)),
\)
the kernel of $B$. Furthermore, $B:\;U/\cN(B)\to V^*$ is an isomorphism.
\end{prop}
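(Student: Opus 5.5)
We plan three steps: boundedness of $\cE$, the identification $\cN(B)=\cE(L_2(\Omega)\times L_2(\Omega))$, and the isomorphism property of $B$ on the quotient.

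\emph{Boundedness.} For $(q,r)\in L_2(\Omega)\times L_2(\Omega)$, the right-hand side $\beta\cdot\grad q-r$ is understood in $H^{-1}(\Omega)$ via $v\mapsto -\vdual{q\beta}{\grad v}-\vdual{r}{v}$, which is legitimate since $\beta$ is constant. Its $H^{-1}$-norm is bounded by $|\beta|\,\|q\|+\|r\|$, up to a Poincar\'e constant. Lax--Milgram with uniformly positive definite $\kappa$ then gives $\|\grad u\|\lesssim\|q\|+\|r\|$. Next, $\sigma:=\kappa\grad u+q\beta\in H(\div,\Omega)$ holds because the equation in distributional form reads $\div\sigma=r$. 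Hence $\|\sigma\|_{\div}\lesssim\|q\|+\|r\|$, and by the definition of the trace norm $\|\hsigma\|_{-1/2,\cS}\le\|\sigma\|_{\div}$. This proves the boundedness of $\cE$.

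\emph{Kernel.} For the inclusion ``$\subset$'', let $\cE(q,r)=(u,\hsigma,q,r)$ and $v\in V$. By the definition of $\trn$,
\[
  \dual{\hsigma}{v}_\cS=\vdual{v}{r}+\vdual{\grad v}{\kappa\grad u+q\beta}_\mesh ,
\]
so $b(\cE(q,r);v)=0$. For the converse, let $\bw=(w,\htau,s,t)\in\cN(B)$. Testing with $v\in H^1_0(\Omega)\subset V$, the skeleton term vanishes because $\dual{\trn\tau}{v}_\cS=0$ for $v\in H^1_0(\Omega)$ by global integration by parts. Hence $w$ solves the weak problem in \eqref{E} with data $(s,t)$, and uniqueness gives $w=u$ for $\cE(s,t)=(u,\hsigma,s,t)$. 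Then $\bw-\cE(s,t)=(0,\htau-\hsigma,0,0)\in\cN(B)$, so $\dual{\htau-\hsigma}{v}_\cS=0$ for all $v\in V$. Since $H^{-1/2}(\cS)\subset V^*$ by definition, this forces $\htau=\hsigma$, and therefore $\bw=\cE(s,t)$. As a consequence $\cN(B)$ is closed, being the kernel of a bounded operator; boundedness of $B$ itself is routine.

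\emph{Isomorphism.} Injectivity on $U/\cN(B)$ is automatic, so it remains to show $\|B\bw\|_{V^*}\gtrsim\inf_{\bz\in\cN(B)}\|\bw-\bz\|_U$ together with surjectivity; the open mapping theorem then yields the isomorphism. Since $\cN(B)$ contains elements with arbitrary last two components, every class has a representative with $s=t=0$, namely $\bw-\cE(s,t)$. Thus it suffices to treat the restriction $B_0:(w,\htau)\mapsto \vdual{\kappa\grad w}{\grad\cdot}_\mesh-\dual{\htau}{\cdot}_\cS$. This is the standard primal DPG operator for $-\div\kappa\grad$, which is known to be an isomorphism $H^1_0(\Omega)\times H^{-1/2}(\cS)\to V^*$ (cf. \cite{DemkowiczG_13_PDM} and \cite{DemkowiczG_25_DPG}). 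The quotient bound follows from $\|(w,\htau,0,0)\|_U\lesssim\|B_0(w,\htau)\|_{V^*}$, and surjectivity of $B$ follows from that of $B_0$.

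The main obstacle is justifying the primal DPG isomorphism. If I cannot simply cite it, I would argue as follows. For surjectivity, given $\ell\in V^*$, solve $-\div\kappa\grad w=\ell|_{H^1_0}$. The functional $\ell-\vdual{\kappa\grad w}{\grad\cdot}_\mesh$ then vanishes on $H^1_0(\Omega)$, and I would identify it as an element of $\trn(H(\div,\Omega))$ via the characterization of the annihilator of $H^1_0(\Omega)$ in $V^*$. This is done by solving local Neumann-type problems $\div\tau+\ldots$ elementwise with $\tau\in H(\div,\el)$, then assembling. The stability bound comes from the same construction, via the closed range theorem or the Carstensen--Demkowicz--Gopalakrishnan splitting lemma for broken spaces.
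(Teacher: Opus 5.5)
Your proposal is correct. The boundedness of $\cE$ and the final isomorphism step match the paper. The paper likewise passes to the representative $\bw-\cE(s,t)$ with vanishing last two components and invokes the primal DPG isomorphism of $B$ on $H^1_0(\Omega)\times H^{-1/2}(\cS)\times\{0\}\times\{0\}$.

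The difference is in the kernel identification. The paper does not check $\cN(B)\subset\cE(L_2(\Omega)\times L_2(\Omega))$ directly. It reads it off the single inequality $\|B\bw\|_{V^*}=\|B(\bw-\cE(s,t))\|_{V^*}\gtrsim\|\bw-\cE(s,t)\|_U$. That inequality gives both the kernel characterization and the lower bound on the quotient, after which the paper appeals to the closed range theorem.

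You prove the kernel inclusion by hand instead:
\begin{itemize}
\item testing with $v\in H^1_0(\Omega)$ removes the skeleton term;
\item uniqueness for the Dirichlet problem then gives $w=u$;
\item the remaining functional $\htau-\hsigma\in H^{-1/2}(\cS)\subset V^*$ vanishes on all of $V$, so it is zero.
\end{itemize}
This makes the kernel statement independent of the cited DPG stability result; it needs only Lax--Milgram and the fact that traces are functionals on $V$. It costs a few extra lines. In your version the cited isomorphism enters only for the quotient stability and surjectivity, exactly where the paper also needs it.

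Your fallback sketch for that isomorphism would also work if fleshed out. Take the Riesz representative of $\ell\in V^*$ elementwise and set $\tau$ equal to its piecewise gradient. If $\ell$ annihilates $H^1_0(\Omega)$, then $\tau\in H(\div,\Omega)$ and $\trn(\tau)=\ell$. This is the standard breaking-spaces argument.
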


\begin{proof}
Given $q,r\in L_2(\Omega)$, the right-hand side function
$\beta\cdot\grad q-r$ in $\eqref{E}$ is an element of the dual space
$H^{-1}(\Omega):=H^1_0(\Omega)^*$ and therefore, the solution
$u\in H^1_0(\Omega)$ in \eqref{E} is well defined and bounded,
$\|\grad u\|\lesssim \|q\|+\|r\|$. By the continuity of $\trn$
and relation $\div(\kappa\grad u+q\beta)=r$, we also find that
\[
   \|\hsigma\|_{-1/2,\cS}^2\lesssim \|\kappa\grad u+q\beta\|_{\div}^2
   = \|\kappa\grad u+q\beta\|^2 + \|r\|^2 \lesssim \|q\|^2+\|r\|^2.
\]
This proves the boundedness of $\cE$.

It is clear that $B:\;H^1_0(\Omega)\times H^{-1/2}(\cS)\times\{0\}\times\{0\}\to V^*$
is an isomorphism, see, e.g., \cite{DemkowiczG_13_PDM}.
This shows that
\[
   \|B\bu\|_{V^*} = \|B(\bu-\cE(q,r))\|_{V^*} \gtrsim \|\bu-\cE(q,r)\|_U
   \quad\forall\bu=(u,\hsigma,q,r)\in U
\]
so that $\cE(q,r)=\bu$ if $B\bu=0$, implying $\cN(B)\subset\cE(L_2(\Omega)\times L_2(\Omega))$.
The inclusion $\cE(L_2(\Omega)\times L_2(\Omega))\subset \cN(B)$ holds by construction
and we conclude the claimed representation of $\cN(B)$.
By the closed range theorem, $B:\;U/\cN(B)\to V^*$ is an isomorphism.
\end{proof}

\section{Discretization} \label{sec_disc}

To discretize minimization problem \eqref{min}, we consider finite-dimensional subspaces
$V_h\subset V$ and $U_h\subset U$.
Subspace $V_h$ induces a discrete operator norm denoted as $\|\cdot\|_{V_h^*}$.
Throughout, we make use of the canonical injection $V^*\subset V_h^*$, i.e.,
functionals $L,B\bw\in V^*$ (for $\bw\in U$) are also considered to be elements of
$V_h^*$ with the same notation.

We note that the existence of a Fortin operator $\opF:\;V\to V_h$ with
\begin{align} \label{Fortin}
   \exists \CF>0\ \forall v\in V,\;\bu_h\in U_h:\quad
  \|\opF v\|_V\le \CF \|v\|_V,\quad
   b(\bu_h,v-\opF v) = 0
\end{align}
implies the discrete stability
\begin{align} \label{stab_disc}
   \|B\bu_h\|_{V_h^*} \le \|B\bu_h\|_{V^*} \le \CF \|B\bu_h\|_{V_h^*}\quad \forall \bu_h\in U_h,
\end{align}
see \cite[Proof of Theorem~2.1]{GopalakrishnanQ_14_APD}.

Using the discrete spaces $U_h$ and $V_h$, the minimum residual discretization of \eqref{pde} reads
\begin{align} \label{min_disc}
   \bu_h=(u_h,\hsigma_h,q_h,r_h)=\argmin_{\bw_h\in U_h}
   \Bigl(\|B\bw_h-L\|_{V_h^*}^2 + \|\rho(w_h)-s_h\|^2 + \|\gamma(w_h)-t_h\|^2\Bigr).
\end{align}
where $\bw_h=(w_h,\htau_h,s_h,t_h)$. As discussed before, this is a combination
of a DPG method for the linear part of the problem with finite-element least squares
for the nonlinear constraints.

In order to provide an error estimate, we need to make a closeness assumption.
For certain linear elliptic problems with solution $u\in H^1_0(\Omega)$
and finite element approximation $u_h$ in a discrete subspace of $H^1_0(\Omega)$,
the Aubin--Nitsche technique shows that $\|u-u_h\|\lesssim h^\alpha \|\grad(u-u_h)\|$
for an $\alpha>0$ depending on regularity properties and polynomial degrees,
and $h$ is as a measure of the underlying mesh, e.g., the maximum of the elements' diameters.
In other words, in the linear case it is natural to assume that,
for sufficiently small mesh size $h$, $\|u-u_h\|\le C\|\grad(u-u_h)\|$
for a small positive constant $C$. We do need this assumption in the following theorem.

\begin{theorem} \label{thm_Cea}
Assume that \eqref{pde} has a solution $u\in H^1_0(\Omega)$, and that
the finite-dimensional subspace $V_h\subset V$ allows for a Fortin operator
$\opF$, i.e., satisfying \eqref{Fortin}.
Let $\bu_h=(u_h,\hsigma_h,q_h,r_h)\in U_h$ be a solution of $\eqref{min_disc}$
such that $u_h$ is close to $u$ in the sense that
$\|u-u_h\|\le c(h)\|\grad(u-u_h)\|$ with $0<c(h)\to 0$ as $h\to 0$.
Then, for sufficiently small $h$, $\bu_h$ satisfies the quasi-optimal error estimate
\begin{align*}
   \|\bu-\bu_h\|_U
   &\lesssim
   \|\bu-\bw_h\|_U
   \quad\forall \bw_h=(w_h,\htau_h,s_h,t_h)\in U_h.
\end{align*}
\end{theorem}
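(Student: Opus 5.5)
The plan is to combine the discrete stability \eqref{stab_disc} with the kernel characterization of Proposition~\ref{prop_ker}, and to use the closeness assumption $\|u-u_h\|\le c(h)\|\grad(u-u_h)\|$ to absorb the Lipschitz perturbations. Let $J_h(\bw_h)$ denote the functional in \eqref{min_disc}, and let $L_\rho,L_\gamma$ be the Lipschitz constants. Write $\be:=\bu-\bu_h=(e_u,e_\sigma,e_q,e_r)$.

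\emph{Step 1: a lower bound for $\|\be\|_U$ in terms of residuals.} By Proposition~\ref{prop_ker}, $\be-\cE(e_q,e_r)$ is controlled by $\|B\be\|_{V^*}$, and $\|\cE(e_q,e_r)\|_U\lesssim\|e_q\|+\|e_r\|$. Hence
\[
   \|\be\|_U\lesssim \|B\be\|_{V^*}+\|e_q\|+\|e_r\|.
\]
Since $q=\rho(u)$ and $r=\gamma(u)$, the triangle inequality and Lipschitz continuity give
\[
   \|e_q\|\le \|\rho(u_h)-q_h\|+L_\rho\|e_u\|,
   \qquad
   \|e_r\|\le \|\gamma(u_h)-r_h\|+L_\gamma\|e_u\|.
\]
The closeness assumption then yields $\|e_u\|\le c(h)\|\grad e_u\|\le c(h)\|\be\|_U$. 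For $h$ small, this term is absorbed into the left-hand side, giving
\[
   \|\be\|_U\lesssim \|B\be\|_{V^*}+\|\rho(u_h)-q_h\|+\|\gamma(u_h)-r_h\|.
\]
This step is where the smallness of $c(h)$ is essential. Without it, the nonlinear terms would only be controlled in $L_2$ and could not be bounded by the kernel-free part.

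\emph{Step 2: pass from $\|B\be\|_{V^*}$ to the discrete residual.} The difficulty is that $\be\notin U_h$, so \eqref{stab_disc} cannot be applied to it directly. Fix an arbitrary $\bw_h\in U_h$ and split $B\be=B(\bu-\bw_h)+B(\bw_h-\bu_h)$. The first term is bounded by $\|B\|\,\|\bu-\bw_h\|_U$. For the second term, \eqref{stab_disc} gives
\[
   \|B(\bw_h-\bu_h)\|_{V^*}\le\CF\|B(\bw_h-\bu_h)\|_{V_h^*}
   \le \CF\bigl(\|B\bw_h-L\|_{V_h^*}+\|B\bu_h-L\|_{V_h^*}\bigr).
\]
Using $B\bu=L$, we have $\|B\bw_h-L\|_{V_h^*}\le\|B\|\,\|\bu-\bw_h\|_U$. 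Collecting terms,
\[
   \|\be\|_U\lesssim \|\bu-\bw_h\|_U+J_h(\bu_h)^{1/2}.
\]

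\emph{Step 3: use minimality.} Since $\bu_h$ minimizes $J_h$, we have $J_h(\bu_h)\le J_h(\bw_h)$. It remains to bound $J_h(\bw_h)$. The linear part satisfies $\|B\bw_h-L\|_{V_h^*}\lesssim\|\bu-\bw_h\|_U$ as above. For the nonlinear parts,
\[
   \|\rho(w_h)-s_h\|\le L_\rho\|u-w_h\|+\|q-s_h\|,
\]
and analogously for $\gamma$. Finally, Friedrichs' inequality gives $\|u-w_h\|\lesssim\|\grad(u-w_h)\|$ because $u-w_h\in H^1_0(\Omega)$. Hence $J_h(\bw_h)^{1/2}\lesssim\|\bu-\bw_h\|_U$, which proves the Céa estimate.

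The main obstacle is Step 1: the absorption argument requires bounding the $L_2$ error of $u_h$ by a small multiple of the full error. This is exactly the role of the Aubin--Nitsche-type hypothesis, and it is also why the estimate holds only for sufficiently small $h$. The implicit constant depends only on $\CF$, $\|B\|$, $\|\cE\|$, the stability constant of Proposition~\ref{prop_ker}, $L_\rho$, $L_\gamma$, and the Friedrichs constant.
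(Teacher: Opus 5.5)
Your proof is correct and uses essentially the paper's argument: Proposition~\ref{prop_ker}, the Fortin-based stability \eqref{stab_disc}, minimality of $\bu_h$, Lipschitz continuity with Friedrichs' inequality, and absorption of $\|u-u_h\|$ via the closeness hypothesis. The only difference is ordering: you apply the kernel splitting to the full error $\bu-\bu_h$ and absorb immediately, whereas the paper applies it through \eqref{stab_disc2} to the discrete difference $\bu_h-\bw_h$ and absorbs at the end, so your handling of the nonlinear terms is slightly shorter.
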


\begin{proof}
As mentioned, the existence of a Fortin operator implies the stability \eqref{stab_disc}.
Therefore, Proposition~\ref{prop_ker} and \eqref{stab_disc} show that any
$\bw_h=(w_h,\htau_h,s_h,t_h)\in U_h$ satisfies
\begin{align} \label{stab_disc2}
   \|\bw_h\|_U
   &\le \|\bw_h-\cE(s_h,t_h)\|_U + \|\cE(s_h,t_h)\|_U \nonumber\\
   &\lesssim \|B\bw_h\|_{V^*} + \|\cE(s_h,t_h)\|_U
   \lesssim  \|B\bw_h\|_{V_h^*} + \|s_h\| + \|t_h\|.
\end{align}
Adding and subtracting $\bw_h=(w_h,\htau_h,s_h,t_h)\in U_h$,
and using estimate \eqref{stab_disc2}, we deduce that
\begin{align} \label{pf1}
   \|\bu-\bu_h\|_U
   &\le \|\bu-\bw_h\|_U + \|\bu_h-\bw_h\|_U
   \nonumber\\
   &\lesssim \|\bu-\bw_h\|_U + \|B(\bu_h-\bw_h)\|_{V_h^*} + \|q_h-s_h\| + \|r_h-t_h\|
   \nonumber\\
   &\lesssim \|\bu-\bw_h\|_U + \|B\bu_h-L\|_{V_h^*} + \|B\bw_h-L\|_{V_h^*}
           + \|q_h-s_h\| + \|r_h-t_h\|.
\end{align}
We add and subtract $\rho(u)$, $\rho(u_h)$, and $\rho(w_h)$ to bound
\begin{align} \label{pf2}
   \|q_h-s_h\|
   \le  \|\rho(u)-\rho(u_h)\| + \|\rho(u_h)-q_h\| + \|\rho(u)-\rho(w_h)\| + \|\rho(w_h)-s_h\|
\end{align}
and, in the same way,
\begin{align} \label{pf3}
   \|r_h-t_h\|
   \le  \|\gamma(u)-\gamma(u_h)\| + \|\gamma(u_h)-r_h\|
   + \|\gamma(u)-\gamma(w_h)\| + \|\gamma(w_h)-t_h\|.
\end{align}
Minimization property \eqref{min_disc} implies the bound
\[
   \|B\bu_h-L\|_{V_h^*}^2 + \|\rho(u_h)-q_h\|^2 + \|\gamma(u_h)-r_h\|^2
   \le
   \|B\bw_h-L\|_{V_h^*}^2 + \|\rho(w_h)-s_h\|^2 + \|\gamma(w_h)-t_h\|^2.
\]
This, combined with estimates \eqref{pf1}, \eqref{pf2}, \eqref{pf3}, leads to
\begin{align*}
   \|\bu-\bu_h\|_U
   &\lesssim
   \|\bu-\bw_h\|_U
   + \|B\bw_h-L\|_{V_h^*} + \|\rho(w_h)-s_h\| + \|\gamma(w_h)-t_h\| \\
   &+ \|\rho(u)-\rho(w_h)\| + \|\gamma(u)-\gamma(w_h)\|
   + \|\rho(u)-\rho(u_h)\| + \|\gamma(u)-\gamma(u_h)\|
\end{align*}
so that, by the Lipschitz continuity of $\rho,\gamma$,
implying their Lipschitz continuity in $L_2(\Omega)$,
\begin{align} \label{err_bound}
   \|\bu-\bu_h\|_U
   &\lesssim
   \|\bu-\bw_h\|_U
   + \|B\bw_h-L\|_{V_h^*} + \|\rho(w_h)-s_h\| + \|\gamma(w_h)-t_h\| + \|u-w_h\| + \|u-u_h\|.
\end{align}
All but the last term on the right-hand side are controlled by $\|\bu-\bw_h\|_U$.
In fact, since $B\bu=L$, the continuous injection of $V^*$ in $V_h^*$ with unit norm
and the boundedness of $B:\;U\to V^*$ show
\[
   \|B\bw_h-L\|_{V_h^*} = \|B(\bu-\bw_h)\|_{V_h^*}
   \le \|B(\bu-\bw_h)\|_{V^*} \lesssim \|\bu-\bw_h\|_U.
\]
An application of the triangle inequality and the Lipschitz continuity of $\rho$ and $\gamma$
imply
\begin{align*}
   &\|\rho(w_h)-s_h\| + \|\gamma(w_h)-t_h\| + \|u-w_h\|\\
   &\quad
    \le \|\rho(u)-\rho(w_h)\| + \|\rho(u)-s_h\|
      + \|\gamma(u)-\gamma(w_h)\| + \|\gamma(u)-t_h\| + \|u-w_h\|\\
   &\quad
    \lesssim \|u-w_h\| + \|\rho(u)-s_h\| + \|\gamma(u)-t_h\|.
\end{align*}
Recall that $\rho(u)=q$ and $\gamma(u)=r$. The Poincar\'e--Friedrichs inequality therefore
leads to
\[
   \|u-w_h\| + \|\rho(u)-s_h\| + \|\gamma(u)-t_h\|
   \lesssim \|\grad(u-w_h)\| + \|q-s_h\| + \|r-t_h\| \le 3\|\bu-\bw_h\|_U.
\]
A combination of the latter estimates and \eqref{err_bound} shows that
\[
   \|\bu-\bu_h\|_U \lesssim \|\bu-\bw_h\|_U + \|u-u_h\|.
\]
By assumption, $h$ is small enough such that there is a constant $C>0$ so that
$\|u-u_h\|\le C\|\grad(u-u_h)\|$ can be covered by the left-hand side
(note that $\|\grad(u-u_h)\| \le \|\bu-\bu_h\|_U$). This finishes the proof.
\end{proof}

\begin{remark} \label{rem_Lip}
The assumption of Lipschitz continuity of $\rho$ and $\gamma$ can be reduced
to their Lipschitz continuity in a neighborhood of the range of a solution $u$.
Then, the proof of Theorem~\ref{thm_min} applies to discrete functions
$\bw_h=(w_h,\htau_h,s_h,t_h)\in U_h$ with $w_h$
sufficiently close in $L_2(\Omega)$ to $u$.
\end{remark}

Let us conclude this section by considering specific discretization spaces.
Let $\cP^p(\omega)$ be the space of polynomials on $\omega\subset\R^d$
of degree less than or equal to $p$.
We consider a regular mesh $\mesh$ on $\Omega$ of shape-regular simplices and denote
the set of faces of $\mesh$ by $\cS$ (with slight abuse of this symbol).
For an element $\el\in\mesh$, $\cF(\el)$ denotes the set of its faces (edges if $d=2$).
We define
\begin{align*}
   \cP^p(\mesh) &:= \{v\in L_2(\Omega);\; v|_\el\in \cP^p(\el),\ \el\in\mesh\},\\
   \cP^p(\cS) &:= \{v\in L_2(\cS);\; v|_\face\in \cP^p(\face),\ \face\in\cF(\mesh)\}
\end{align*}
and, for $\el\in\mesh$ and $\face\in\cF(\el)$,
\begin{align*}
   \cP^p_\face(\el) &:= \{v\in\cP^{d+p}(\el);\; v|_{\face'}=0,\ {\face'}\in\cF(\el)\setminus\{\face\}\}
   &&\text{(face bubble functions)},\\
   \cP^p_\el(\el) &:= \{v\in\cP^{d+p+1}(\el);\; v|_{\face'}=0,\ \face'\in\cF(\el)\}
   &&\text{(element bubble functions)},\\
   V_p(\el) &:= \cP^0(\el)+\sum_{\face\in\cF(\el)} \cP^p_\face(\el)+\cP^p_\el(\el).
\end{align*}
For non-negative integer $p$, we select the discrete spaces
\begin{align} \label{Uh}
   U_h &:= \cP^{p+1}(\mesh)\cap H^1_0(\Omega)\; \times\; \cP^{p}(\cS)
           \;\times\; \cP^p(\mesh) \;\times\; \cP^p(\mesh)
\end{align}
and
\begin{align} \label{Vh}
   V_h := \Pi_{\el\in\mesh} V_p(\el)\quad \text{if}\ p>0,\qquad
   V_h := \Pi_{\el\in\mesh} \bigl(\cP^1(\el)+\cP^0_\el(\el)\bigr)\quad \text{if}\ p=0.
\end{align}
The construction of $V_h$ is due to \cite[Section 3]{FuehrerH_24_RDT}. These test spaces
are of lower dimension than previously used spaces, cf.~\cite{GopalakrishnanQ_14_APD}.

\begin{cor} \label{cor_Fortin}
The selection of discrete spaces \eqref{Uh}, \eqref{Vh} allows for a Fortin operator and, thus,
Theorem~\ref{thm_Cea} applies.
\end{cor}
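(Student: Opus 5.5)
We construct an element-wise Fortin operator $\opF:V\to V_h$, $\opF v|_\el:=\opF_\el(v|_\el)$, with $\opF_\el:H^1(\el)\to V_p(\el)$ (resp.\ $\cP^1(\el)+\cP^0_\el(\el)$ if $p=0$). The orthogonality $b(\bw_h,v-\opF v)=0$ for all $\bw_h=(w_h,\htau_h,s_h,t_h)\in U_h$ will follow from local moment conditions. Integrating by parts elementwise in the first term of $b$ and using that $\beta$ is constant,
\[
   b(\bw_h;v) = \sum_{\el\in\mesh}\Bigl(-\vdual{\div(\kappa\grad w_h)}{v}_\el + \vdual{\grad s_h\cdot\beta}{v}_\el\cdot(-1) + \dual{(\kappa\grad w_h+s_h\beta)\cdot n}{v}_{\partial\el} - \dual{\htau_h}{v}_{\partial\el} + \vdual{t_h}{v}_\el\Bigr),
\]
up to sign conventions. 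For the construction we assume $\kappa$ is $\mesh$-piecewise constant (as is standard for such Fortin arguments). Then, on each $\el$, the volume test functions are polynomials of degree $\le p$: $\div(\kappa\grad w_h)\in\cP^{p-1}$, $\beta\cdot\grad s_h\in\cP^{p-1}$ and $t_h\in\cP^p$. The boundary test functions are in $\cP^p(\face)$ on each face $\face$: $(\kappa\grad w_h)\cdot n\in\cP^p$, $s_h\beta\cdot n\in\cP^p$ and $\htau_h\in\cP^p$. Thus it suffices to have, for all $\el$ and $\face\in\cF(\el)$,
\[
   \vdual{v-\opF_\el v}{\phi}_\el=0\ \ \forall\phi\in\cP^p(\el),\qquad
   \dual{v-\opF_\el v}{\psi}_\face=0\ \ \forall\psi\in\cP^p(\face),
\]
together with the local bound $\|\opF_\el v\|_{1,\el}\lesssim\|v\|_{1,\el}$.

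We build $\opF_\el$ in three steps, as in \cite[Section 3]{FuehrerH_24_RDT}.
\begin{enumerate}
\item Choose the constant part in $\cP^0(\el)$, for instance the integral mean. This is needed only to control the $L_2$ part of the stability.
\item For each face $\face$, add a face bubble from $\cP^p_\face(\el)$ that corrects the face moments against $\cP^p(\face)$. This is uniquely solvable because the map $\cP^p_\face(\el)\ni b\mapsto(\dual{b}{\psi}_\face)_{\psi\in\cP^p(\face)}$ is bijective: the restriction $b|_\face$ is the face bubble $\lambda$-product times an arbitrary $\cP^p(\face)$ polynomial, which gives a positive weighted $L_2$ pairing. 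Bubbles of other faces vanish on $\face$, so the face corrections decouple.
\item Add an element bubble from $\cP^p_\el(\el)$ that fixes the volume moments against $\cP^p(\el)$, again by a weighted $L_2$ positivity argument. Element bubbles vanish on $\partial\el$ and so do not destroy the face moments.
\end{enumerate}
For $p=0$ the face moments are only constants. These are handled by $\cP^1(\el)$, since nodal hat-type functions dual to faces exist in $\cP^1$, and the element bubble $\cP^0_\el$ fixes the mean.

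Stability is obtained by scaling. On the reference element, all operators are bounded by finite-dimensional norm equivalence, with the moments controlled by $\|v\|_{L_2(\el)}$ and $\|v\|_{L_2(\partial\el)}\lesssim\|v\|_{1,\el}$ via the trace theorem. Mapping to a shape-regular $\el$ and using that the $V$-norm is the full $H^1$ norm, which is not scale-invariant, may give an $h$-dependent constant. To avoid this, the face corrections should be applied to $v-\overline v_\el$, with mean $\overline v_\el$, so that a scaled trace inequality $h^{-1}\|v-\overline v\|^2_\el+\|\grad v\|_\el^2$ bounds everything uniformly. This is the main obstacle: getting $\CF$ independent of $h$ while respecting the constraint that moments of the constant part also enter. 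I would handle it by defining the constant part to reproduce the moment against $\phi=1$ of $v$, and then correcting only the remaining moments with bubbles built on $v-\overline v_\el$, using Poincaré on $\el$.

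Once \eqref{Fortin} holds, the stability \eqref{stab_disc} follows, and Theorem~\ref{thm_Cea} applies.
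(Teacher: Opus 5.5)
Your proof is correct and follows the paper's route. The paper also reduces $b(\bu_h,v-\opF v)=0$, by elementwise integration by parts, to vanishing moments against $\cP^p(\el)$ and $\cP^p(\face)$, but it simply cites \cite[Theorems~3.2, 3.8]{FuehrerH_24_RDT} for the operator, whose construction (mean plus face and element bubbles acting on $v-\overline v_\el$, with uniform stability by scaling and Poincar\'e) you sketch; your assumption of $\mesh$-piecewise constant $\kappa$ is tacitly needed in the paper's integration-by-parts step too. One small correction: for $p\ge1$ the moment map on $\cP^p_\face(\el)$ is only surjective, not bijective, since $\dim\cP^p_\face(\el)=\dim\cP^p(\el)>\dim\cP^p(\face)$, so you must fix a right inverse (for example, extend $\psi\in\cP^p(\face)$ via barycentric coordinates), which your weighted-pairing argument already justifies.
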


\begin{proof}
According to \cite[Theorems~3.2, 3.8]{FuehrerH_24_RDT}, there is a bounded operator
$\opF:\;V\to V_h$
such that any $v\in V_h$ satisfies
\[
   \vdual{\eta}{v-\opF v}_\el=0\ \forall \eta\in\cP^p(\el),\quad
   \dual{\mu}{v-\opF v}_{\partial\el}=0\ \forall\mu\in \cP^{p}(\cF(\el)),\quad \el\in\mesh.
\]
For $\bu_h=(u_h,\hsigma_h,q_h,r_h)\in U_h$ we find that
\begin{align*}
   b(\bu_h,v-\opF v) &=
   \vdual{\kappa\grad u_h+q_h\beta}{\grad (v-\opF v)}_\mesh
   - \dual{\hsigma_h}{v-\opF v}_\cS + \vdual{r_h}{v-\opF v}\\
   &=
   \sum_{\el\in\mesh} \vdual{\kappa\grad u_h+q_h\beta}{\grad (v-\opF v)}_{\el},
\end{align*}
and integration by parts on every element shows that the latter term vanishes.
This proves \eqref{Fortin} as claimed.
\end{proof}

\section{Numerics} \label{sec_num}

\subsection{Nonlinear systems} \label{sec_Newton}
We consider differentiable nonlinear functions $\rho$ and $\gamma$.
The Euler--Lagrange equations for the minimum of
$\|B\bw_h-L\|_{V_h^*}$ with solution $\bu_h\in U_h$ are
\[
   b(\bu_h,\Theta_h \bw_h)=L(\Theta_h\bw_h)\quad\forall \bw_h\in U_h
\]
with discrete trial-to-test operator $\Theta_h:\;U_h\to V_h$ defined by
\[
   \ip{\Theta_h\bw_h}{v_h}_V = b(\bw_h,v_h)\quad\forall v_h\in V_h,
\]
cf.~\eqref{prob_bLTheta}.
As discussed previously, after \eqref{min}, this is a DPG scheme.
Again, we refer to \cite{DemkowiczG_25_DPG}.

Adding the variations of the nonlinear $L_2$-terms,
we obtain the Euler--Lagrange equations of minimum residual discretization \eqref{min_disc},
\begin{align} \label{EL}
   &\
   b(\bu_h,\Theta_h \bw_h)
   + \vdual{\rho(u_h)-q_h}{\rho'(u_h)w_h-s_h}
   + \vdual{\gamma(u_h)-r_h}{\gamma'(u_h)w_h-t_h}
   = L(\Theta_h\bw_h)
\end{align}
for any $\bw_h=(w_h,\htau_h,s_h,t_h)\in U_h$,
and with solution $\bu_h=(u_h,\hsigma_h,q_h,r_h)\in U_h$.
For twice differentiable functions $\rho$ and $\gamma$,
we solve these systems by the Newton method.
We stop the iterations when the global residual
\begin{equation} \label{Res}
   \mathrm{Res}
   := \Bigl(\|B\bu_h-L\|_{V_h^*}^2 + \|\rho(u_h)-q_h\|^2 + \|\gamma(u_h)-r_h\|^2 \Bigr)^{1/2}
\end{equation}
is below a certain threshold.

\subsection{Adaptivity} \label{sec_adap}
One of the main advantages of DPG and least-squares methods is that error estimation
and mesh refinement indicators are intrinsic.
For details on the DPG a posteriori error analysis, we refer to
\cite{CarstensenDG_14_PEC} and \cite[Sec.~6]{DemkowiczG_25_DPG}.
By the locality of $L_2$-norms and the product test space $V_h$,
the global residual Res \eqref{Res} consists of local terms assigned to elements,
in particular
\begin{equation*} 
   \|B\bu_h-L\|_{V_h^*}^2
   = \sum_{\el\in\mesh} \|\mathrm{res}\|_{V_h(\el)^*}^2,\quad
   \|\mathrm{res}\|_{V_h(\el)^*} := \|B\bu_h-L\|_{(\cP^{d+p}(\el),\|\cdot\|_{1,\el})^*}.
\end{equation*}
Therefore,
\(
   \mathrm{Res}^2 = \sum_{\el\in\mesh} \mathrm{Res}(\el)^2
\)
with
\[
   \mathrm{Res}(\el)^2
   := \|\mathrm{res}\|_{V_h(\el)^*}^2 + \|\rho(u_h)-q_h\|_\el^2 + \|\gamma(u_h)-r_h\|_\el^2,
\]
and $\mathrm{Res}$ is computable. The indicators $\mathrm{Res}(\el)$ allow to perform
adaptive mesh refinements.
We use newest vertex bisection \cite{Mitchel_17_30Y} and
the bulk criterion \cite{Doerfler_96_CAA} with parameter $\theta\in (0,1)$
to mark the smallest set of elements $\wilde\mesh\subset\mesh$ so that
\[
   \sum_{\el\in\wilde\mesh} \mathrm{Res}(\el)^2
   \ge
   \theta \mathrm{Res}^2.
\]

\subsection{Experiments} \label{sec_exp}
We present numerical experiments in two space dimensions.
We use the discrete spaces $U_h$, $V_h$ defined in \eqref{Uh}, \eqref{Vh}
and select the lowest order $p=0$. We start with a uniform mesh of triangles and generate
sequences of meshes with $N$ elements,
based on uniform refinements or adaptive refinements with $\theta=1/2$
as described in \S\ref{sec_adap}. 
The existence of a Fortin operator is guaranteed by Corollary~\ref{cor_Fortin}.
We consider examples with manufactured solutions by prescribing $u$, $\rho$, and $\gamma$,
and setting $\beta=(1,2)^\top$.
The right-hand side functions $f$ are calculated accordingly.
The nonlinear forms are integrated by the $3$-point Gauss formula on elements.
For the residual and error calculation, we switch to $7$ points.

For every mesh, the Newton iterations for the solution of \eqref{EL} start with trivial
approximations of $\hsigma_h$, $q_h$ and $r_h$,
and the nodal interpolation of Dirichlet data and trivial extension to an
element of $P^1(\mesh)\cap H^1(\Omega)$ as initial guess for $u_h$.
Of course, this approach can be adapted to nested iterations where solutions
on a given mesh are used to generate an initial guess on a refined mesh.
The Newton iterations are stopped when $\mathrm{Res}<10^{-6}$, cf.~\eqref{Res}.
Throughout, the observed iteration numbers do not exceed $4$. 
Below, we report on the residuals
\begin{equation} \label{res_terms}
   \|\mathrm{res}\|_{V_h^*} := \|B\bu_h-L\|_{V_h^*},\quad
   \|\rho(u_h)-q_h\|,\quad \|\gamma(u_h)-r_h\|,
\end{equation}
the individual terms of the error
\begin{equation} \label{err_terms}
   |\bu-\bu_h|_U
   := \Bigl(\|\grad(u-u_h)\|^2 + \|\rho(u)-q_h\|^2 + \|\gamma(u)-r_h\|^2\Bigr)^{1/2},\quad
   \text{and}\quad \|u-u_h\|.
\end{equation}
(Note that $|\bu-\bu_h|_U$ is lacking the $\hsigma$-component of $\|\bu-\bu_h\|_U$.)

\bigskip \noindent
{\bf Example 1.}
We consider the unit square $\Omega=(0,1)^2$ and select
$\rho(u)=\cos(u)$, $\gamma(u)=\arctan(u)$, and $u(x,y)=\sin(2\pi x)\sin(\pi y)$.
For uniform mesh refinements, Figure~\ref{fig2_err_unif} shows the residual
and error terms listed in \eqref{res_terms} and \eqref{err_terms}, respectively.
They confirm the expected convergence order $O(N^{-1/2})$ (linear with respect to the mesh size)
of both the global residual $\mathrm{Res}$ and the error $|\bu-\bu_h|_U$. 
We also observe that $\|u-u_h\|=O(N^{-1})$ is of higher order than
$\|\grad (u-u_h)\|=O(N^{-1/2})$ so that the assumptions of Theorem~\ref{thm_Cea}
are indeed satisfied.

\bigskip \noindent
{\bf Example 2.}
We consider the $L$-shaped domain $(-1,1)^2\setminus[0,1)\times (-1,0]$, select
$\rho(u)=u^2$, $\gamma(u)=u^3$, and prescribe the typical Laplace singularity
$u(r,\phi)= r^{2/3} \cos(2/3\phi)$ for polar coordinates $(r,\phi)$
centered at the origin and with $\phi$ starting with $0$ at the angle $3\pi/4$.
Note that $u$ satisfies  $\Delta u=0$ in $\Omega$ and vanishes at the edges
that meet at the origin.
The harmonicity of this function is irrelevant for our problem
(though simplifies the calculation of $f$).
But it provides a typical test case of reduced regularity to check for convergence
of the scheme. Observe that the nonlinear functions are
(uniformly) Lipschitz continuous only on bounded intervals, i.e., in a neighborhood
of $u$, cf.~Remark~\ref{rem_Lip}.

We plot the error and residual terms as in the first example.
Figure~\ref{fig4_err_unif} shows the results for uniformly refined meshes.
As expected, convergence orders are reduced to $O(N^{-1/3})$ which corresponds
to the regularity $u\in H^{2/3+1-\epsilon}(\Omega)$ for any $\epsilon>0$.
We also observe an increased convergence order $\|u-u_h\|=O(N^{-2/3})$,
suggesting the Aubin--Nitsche assumption of Theorem~\ref{thm_Cea} to be satisfied.
The results for adaptive mesh refinements from Figure~\ref{fig4_err_adap}
indicate re-established optimal convergence orders of $O(N^{-1/2})$
and $\|u-u_h\|=O(N^{-1})$, again confirming validity of the Aubin--Nitsche assumption
in Theorem~\ref{thm_Cea}.
For illustration, Figure~\ref{fig4_sol} presents the exact solutions $u$, $q=\rho(u)$,
$r=\gamma(u)$ in the top row and, in the bottom row,
their approximations after $3$ adaptive refinements on a mesh of $N=196$ elements.
As expected, the mesh is refined towards the reentrant corner (barely visible).

\begin{figure}
\begin{center}
\includegraphics[height=0.6\textwidth,width=0.9\textwidth]{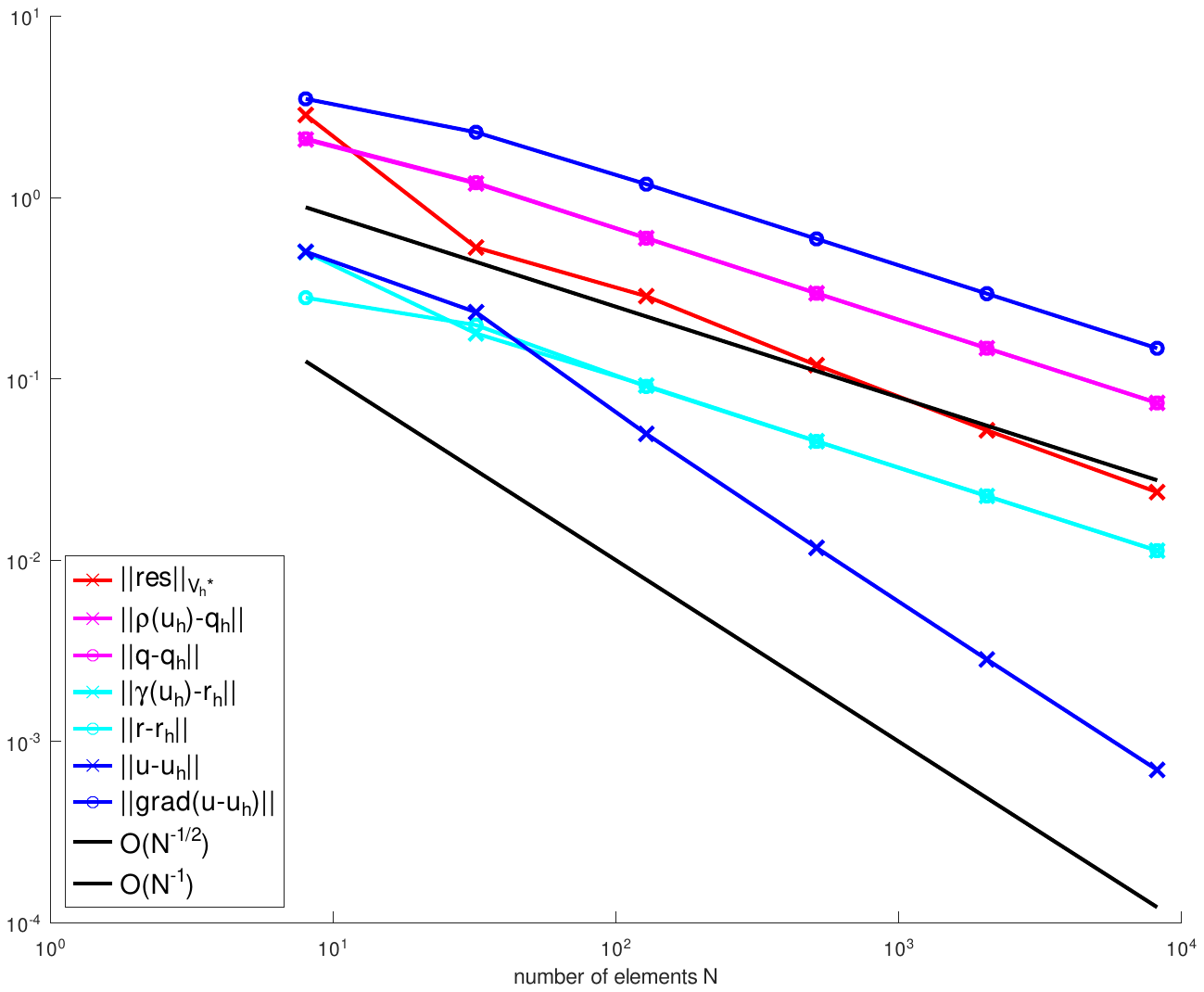}
\end{center}
\caption{Example 1. Errors and residuals, uniform refinements.}
\label{fig2_err_unif}
\end{figure}

\begin{figure}
\begin{center}
\includegraphics[height=0.6\textwidth,width=0.9\textwidth]{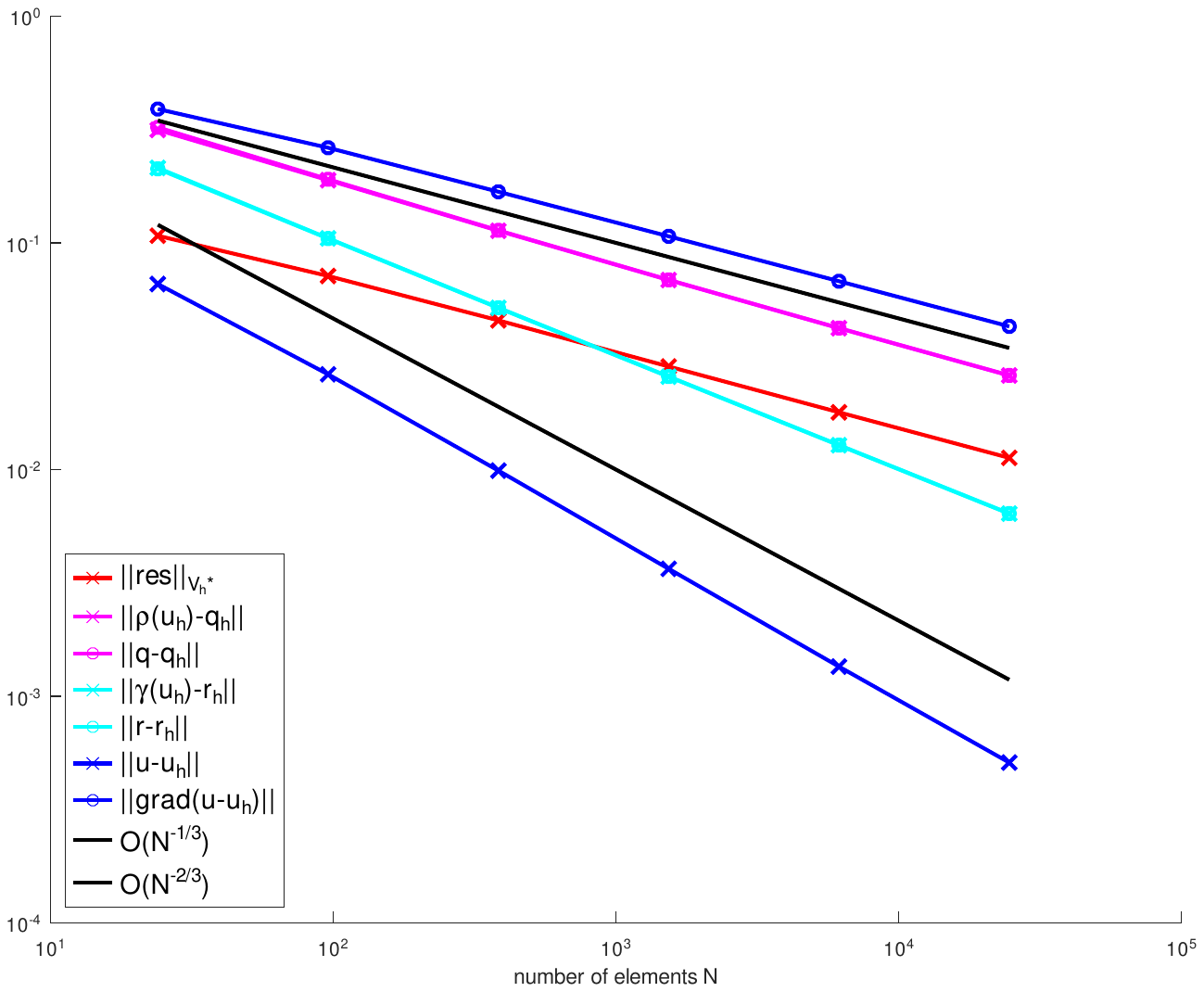}
\end{center}
\caption{Example 2. Errors and residuals, uniform refinements.}
\label{fig4_err_unif}
\end{figure}

\begin{figure}
\begin{center}
\includegraphics[height=0.6\textwidth,width=0.9\textwidth]{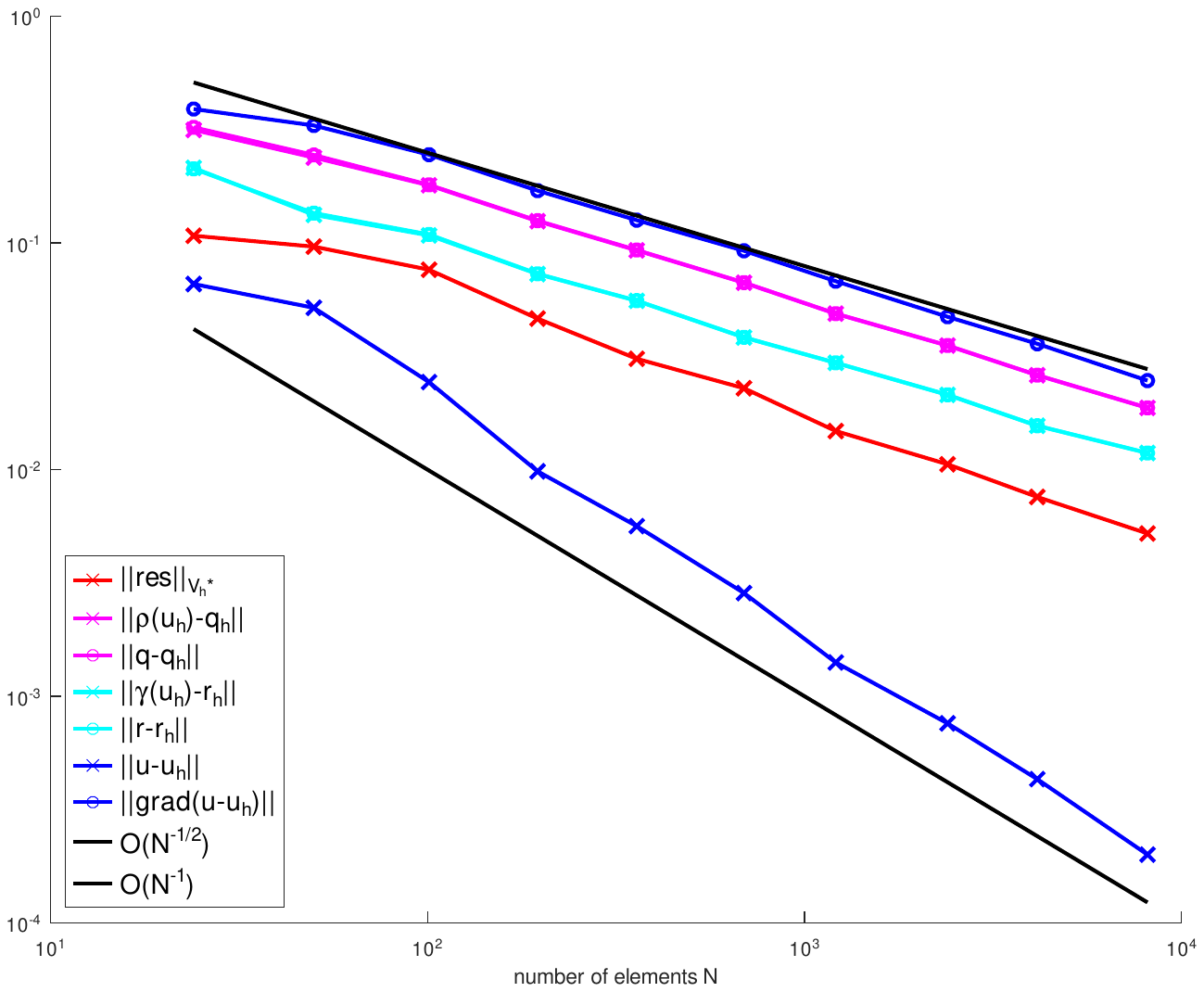}
\end{center}
\caption{Example 2. Errors and residuals, adaptive refinements.}
\label{fig4_err_adap}
\end{figure}

\begin{figure}
\begin{center}
\includegraphics[height=0.5\textwidth,width=\textwidth]{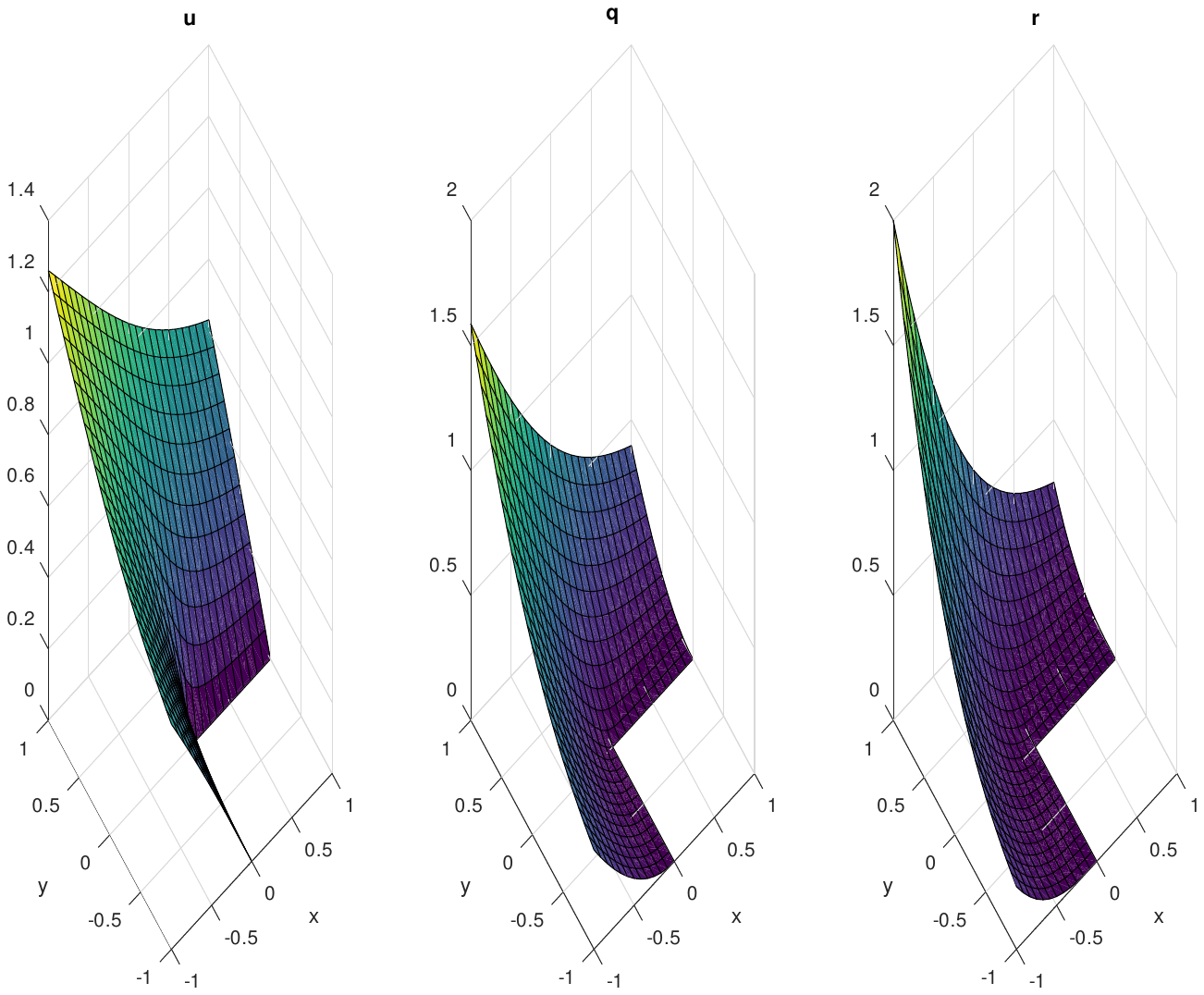}\\[-2em]
\includegraphics[height=0.5\textwidth,width=\textwidth]{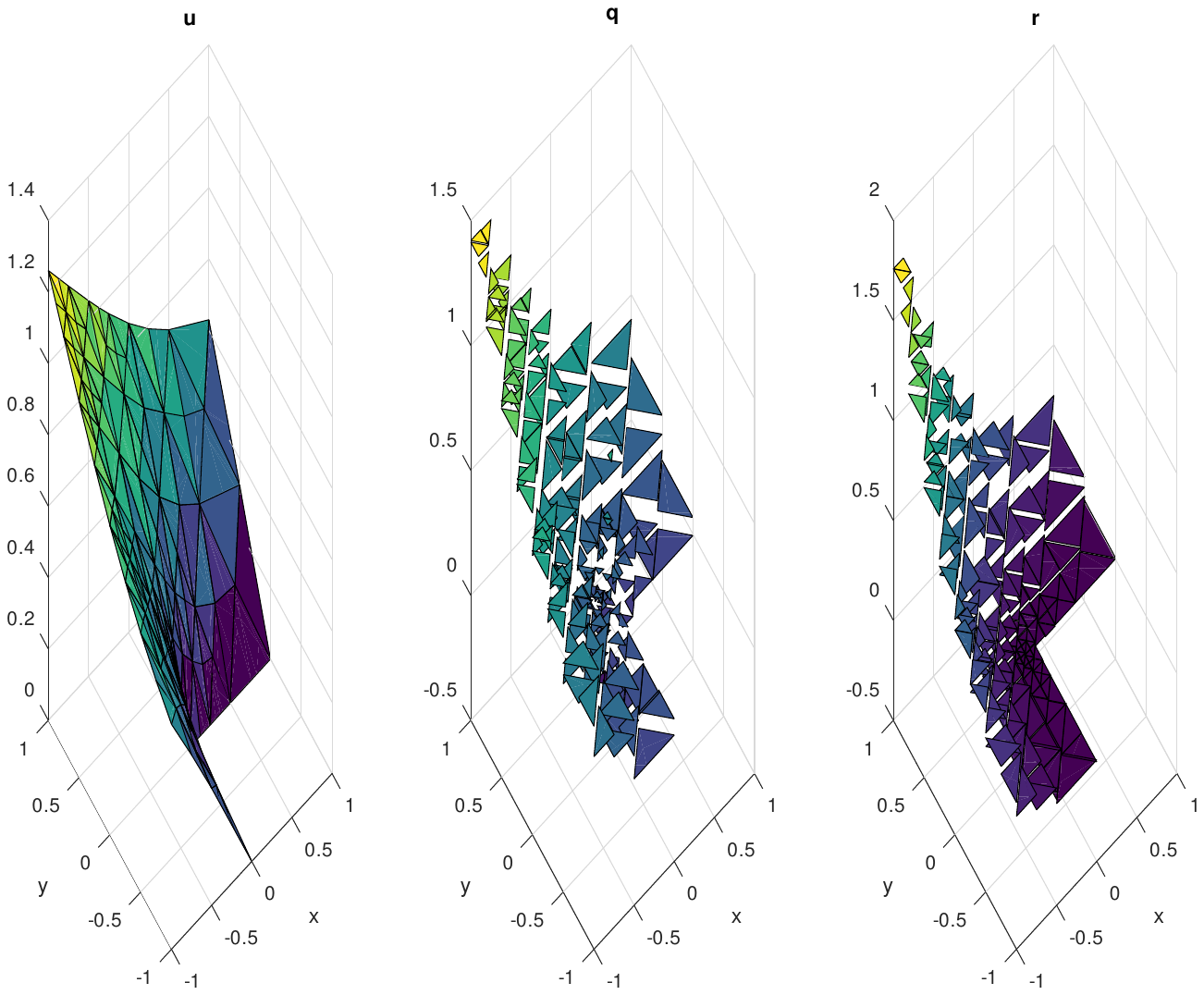}
\end{center}
\caption{Example 2. Exact solutions $u$, $q=\rho(u)$, $r=\gamma(u)$ (top)
and their approximations (bottom) after $3$ adaptive refinements, $N=196$.}
\label{fig4_sol}
\end{figure}


\bibliographystyle{siam}
\bibliography{/home/norbert/tex/bib/heuer,/home/norbert/tex/bib/bib}
\end{document}

%% file: header.tex
\newtheorem{theorem}{Theorem}

\newtheorem{cor}[theorem]{Corollary}
\newtheorem{prop}[theorem]{Proposition}
\newtheorem{remark}[theorem]{Remark}
\theoremstyle{definition}

\newcommand{\wilde}{\widetilde}
\newcommand{\R}{\ensuremath{\mathbb{R}}}
\newcommand{\mesh}{\mathcal{T}}
\newcommand{\el}{T}

\newcommand{\face}{F}
\newcommand{\cS}{\ensuremath{\mathcal{S}}}
\newcommand{\cN}{\ensuremath{\mathcal{N}}}
\newcommand{\cP}{\ensuremath{\mathcal{P}}}
\newcommand{\cF}{\ensuremath{\mathcal{F}}}

\newcommand{\bu}{\boldsymbol{u}}
\newcommand{\bw}{\boldsymbol{w}}
\newcommand{\hsigma}{{\widehat\sigma}}
\newcommand{\htau}{{\widehat\tau}}

\DeclareMathOperator{\trn}{tr_\mathit{n}}
\DeclareMathOperator*{\argmin}{\textrm{arg\,min}}
\newcommand{\grad}{\nabla}
\let\div\undefined
\DeclareMathOperator{\div}{div}
\newcommand{\opF}{{\Pi_F}}

\newcommand{\cE}{\mathcal{E}}
\newcommand{\CF}{C_F}

\newcommand{\dual}[2]{\langle#1\hspace*{.5mm},#2\rangle}
\newcommand{\vdual}[2]{(#1\hspace*{.5mm},#2)}
\newcommand{\ip}[2]{\llangle#1\hspace*{.5mm},#2\rrangle}